\newtheorem{theorem}{Theorem}[section]
\newtheorem{lemma}[theorem]{Lemma}
\theoremstyle{definition}
\newtheorem{example}[theorem]{Example}
\theoremstyle{remark}
\numberwithin{equation}{section}
\begin{document}

\begin{abstract}

Given a ring $R$ with center $Z(R)$, we say a linear map $f:R\rightarrow R$ is commuting if $[f(x),x]=0$ for all $x\in R$. Such a map has a standard form if there exists $\lambda\in R$ and additive $\mu:R\rightarrow Z(R)$ such that $f(x)=\lambda x+\mu(x)$ for all $x\in R$. We characterize the linear commuting maps over the $n\times n$ Heisenberg algebra, showing that such maps need not be of the standard form.
\end{abstract}

\begin{keyword}
Commuting Maps
Upper Triangular Matrices
Strictly Upper Triangular Matrices
Functional Identities
Heisenberg Algebra
\end{keyword}

\begin{frontmatter}

\title{Commuting maps on the Heisenberg algebra}

\author{Jordan Bounds}
\address{Jordan Bounds, Department of Mathematics, Furman University, Greenville, South Carolina;}
\ead{jordan.bounds@furman.edu}

\author{Ellis Edinkrah}
\address{Ellis Edinkrah, Department of Mathematics, Furman University, Greenville, South Carolina;}
\ead{edinel9@furman.edu}

\end{frontmatter}
%\tableofcontents

\section{Introduction}

	Let $R$ be a ring with center $Z(R)$. A map $f:R\rightarrow R$ is called {\em commuting} if $[f(x),x]=0$ for all $x\in R$ where $[A,B]=AB-BA$ denotes the standard ring commutator. The study of commuting maps dates back to Posner's work in 1957 in which he proved a non-commutative prime ring cannot possess a nonzero commuting derivation \cite{posner57}. Bre{\u s}ar later showed that an additive commuting map $f$ over a simple unital ring $R$ must be of the form \begin{equation}\label{standard form} f(x)=\lambda x+\mu(x)\end{equation} for some $\lambda\in Z(R)$ and additive $\mu:R\rightarrow Z(R)$ \cite{bresar93}. Significant effort has since been applied in investigating commuting maps over various rings and algebras (see, for example,  \cite{bbc, bounds16, cheung01, franca12, luh70, mayne76, mayne84, mayne92, slowikahmed, vukman90, vukman92}) with many concluding that commuting maps under various assumptions tend to possess the so-called {\em standard form} given in Equation \eqref{standard form}. A detailed account of many of these results, along with the developing theory of commuting maps, can be found in the survey paper by Bre{\u s}ar \cite{bresar04}.

	Particular focus has been placed on examining the structure of commuting maps over matrix rings and algebras, yielding interesting results in the settings of upper and strictly upper triangular matrices. In particular, Beidar, Bre{\u s}ar, and Chebotar \cite{bbc} proved that a linear commuting map defined on $T_n=T_n(F)$, the algebra of $n\times n$ upper triangular matrices with entries in a field $F$, must be of the standard form. The first author then showed \cite{bounds16} that linear commuting maps defined on $\mathcal N_n(F)$, the ring of strictly upper triangular matrices with entries in a field $F$ of characteristic zero, are almost of the standard form. This was later generalized to strictly upper triangular matrices with entries in a unital ring by Ko and Liu \cite{koliu23} as follows.

\begin{theorem}[Theorem 1.1 in \cite{koliu23}]\label{koliu}
    Let $R$ be a ring with 1 and suppose $f:\mathcal N_n(R)\rightarrow \mathcal N_n(R)$ is an additive map satisfying $[f(X),X]=0$ for all $X\in N_n(R)$. Then there exists $\lambda\in Z(R)$ and additive maps $\mu:\mathcal N_n(R)\rightarrow Z(\mathcal N_n(R))$, $\nu:\mathcal N_n(R)\rightarrow\Omega$ such that \[f(X)=\lambda X+\mu(X)+\nu(X)\] for all $X\in \mathcal N_n(R)$ where $\nu(X)=e_{1,1}Xae_{2,n-1}+e_{2,n}aXe_{n,n}$.
\end{theorem}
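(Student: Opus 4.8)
The plan is to exploit the bilinearity that the commuting condition acquires after linearization, together with the matrix-unit basis of $\mathcal N_n(R)$. \emph{First}, replacing $X$ by $X+Y$ in $[f(X),X]=0$ and using additivity of $f$ yields the bilinear identity $[f(X),Y]+[f(Y),X]=0$ for all $X,Y\in\mathcal N_n(R)$. Since every element of $\mathcal N_n(R)$ is an $R$-combination of the strict matrix units $e_{i,j}$ (with $i<j$) and both $f$ and the bracket are additive, it suffices to understand $f$ on each $e_{i,j}$ and to impose the bilinear identity on all pairs $(e_{i,j},e_{k,l})$. Writing $f(e_{i,j})=\sum_{p<q} c^{\,i,j}_{p,q}\,e_{p,q}$ with coefficients $c^{\,i,j}_{p,q}\in R$ placed on the left, and using the rule that $r\,e_{a,b}$ and $s\,e_{c,d}$ multiply to $rs\,\delta_{b,c}\,e_{a,d}$, each such pairing becomes a finite system of linear relations among the $c^{\,i,j}_{p,q}$.

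\emph{Second}, I would isolate the scalar $\lambda$. Testing the bilinear identity against ``chain'' pairs such as $(e_{i,i+1},e_{i+1,j})$ forces the action of $f$ along the superdiagonal to be multiplication by a single element $\lambda$, and a separate comparison (letting the $R$-entries of $X$ vary freely) shows that $\lambda$ must commute with everything that can appear, hence $\lambda\in Z(R)$. Setting $g(X)=f(X)-\lambda X$ preserves the commuting property, because $[\lambda X,X]=\lambda X^2-\lambda X^2=0$ for central $\lambda$, and $g$ again lands in $\mathcal N_n(R)$. It therefore remains to prove that $g$ takes values in $Z(\mathcal N_n(R))+\Omega$. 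Here one uses that $Z(\mathcal N_n(R))=R\,e_{1,n}$ for $n\ge 3$, which is what produces the central summand $\mu$.

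\emph{Third}, I would solve the reduced system for $g$. Running the bilinear relations for $g$ against the matrix units, the ``interior'' coefficients propagate: each relation ties a coefficient $c_{p,q}$ to a neighbour obtained by shifting one index, and because these chains march across the whole index range they collapse every interior coefficient to zero. The coefficients that survive are those in the central position $(1,n)$, accounting for $\mu$, together with those in the two corner positions $(1,n-1)$ and $(2,n)$. Tracking the surviving relations shows the freedom in these two corners is governed by a \emph{single} parameter $a\in R$, consistently shared between them, which is exactly the data assembled by $\nu(X)=e_{1,1}Xa\,e_{2,n-1}+e_{2,n}a\,X e_{n,n}$; additivity of $\nu$ is immediate from its matrix-product form, and the degenerate small cases ($n\le 2$, where $\Omega$ is trivial) are checked directly.

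The \emph{main obstacle} is precisely this corner analysis. Along the boundary of the index range the propagation relations degenerate, since the shifted index $0$ or $n+1$ falls outside the allowed range and the corresponding commutator term simply disappears; consequently the coefficients at $(1,n-1)$ and $(2,n)$ are underdetermined by the chain relations that annihilate everything in the interior. The delicate point is to show that the residual freedom is genuinely one-dimensional over $R$—a single $a$—and that the two corners are forced to use the \emph{same} $a$, so that the anomaly organizes into the one additive map $\nu$ rather than two independent maps. A secondary technical issue, absent from the characteristic-zero field version of this result, is that $R$ is an arbitrary unital ring: wherever the field proof would divide by an integer, I would instead rephrase the relation so that the unit $1\in R$ and purely additive manipulations suffice, avoiding any appeal to torsion-freeness.
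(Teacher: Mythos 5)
A point of context first: the present paper does not prove Theorem \ref{koliu}; the statement is imported verbatim from Ko and Liu \cite{koliu23}, so there is no internal proof to measure your attempt against. Judged on its own terms, your outline follows the natural linearization-plus-matrix-units strategy (essentially the one the present paper runs for $\mathcal H_n$ in Lemmas \ref{l0}--\ref{l3}), but it has a genuine gap at its very first reduction. You assert that, because $f$ and the bracket are additive, ``it suffices to understand $f$ on each $e_{i,j}$.'' That is false over an arbitrary unital ring $R$: additivity does not give $R$-linearity, so $f(re_{i,j})$ is not determined by $f(e_{i,j})$. The correct starting point is to write $f(re_{i,j})=\sum_{p<q}f^{i,j}_{p,q}(r)\,e_{p,q}$ with each $f^{i,j}_{p,q}\colon R\rightarrow R$ merely additive, and the real content of the theorem is to show, by feeding elements $re_{i,j}+se_{k,l}$ with $r,s$ ranging over all of $R$ into the linearized identity, that the surviving component maps are genuine one- or two-sided multiplications: the diagonal components satisfy $f^{i,j}_{i,j}(r)=\lambda r=r\lambda$ (which is precisely what forces $\lambda\in Z(R)$), and the corner components are $r\mapsto ra$ and $r\mapsto ar$ for a single $a\in R$. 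Your sketch collapses all of this into fixed coefficients $c^{\,i,j}_{p,q}$ multiplying on one side, which silently assumes the conclusion. This is not the ``secondary technical issue'' about avoiding division by integers that you flag at the end; it is the main point separating the ring version from the characteristic-zero field version in \cite{bounds16}.

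Beyond that, the remaining steps are descriptions of what must happen rather than demonstrations that it does: the ``propagation'' that annihilates the interior coefficients, the extraction of $\lambda$ from the chain pairs $(e_{i,i+1},e_{i+1,j})$, and above all the claim that the two corner anomalies at positions $(1,n-1)$ and $(2,n)$ are governed by the \emph{same} element $a$ are each asserted without computation --- and the last of these is exactly the point you yourself identify as the main obstacle. As it stands the proposal is a reasonable roadmap for the field case and an incomplete one for the stated theorem, not a proof.
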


The results regarding commuting maps over strictly upper triangular matrices are quite interesting. As $N_n$ is nilpotent, one might expect the structure of a commuting map to greatly differ from the setting of a prime and semi-prime rings. However, the above theorem indicates that commuting maps on $N_n$ are very close to being of the standard form. In the present paper we investigate the linear commuting maps over another well-known nilpotent algebra. Let $\mathcal H_n=\mathcal H_n(F)$ denote the Heisenberg algebra over a field $F$. That is, $\mathcal H_n$ is the set of all matrices of the form \[\begin{pmatrix}0&x_{1,2}&x_{1,3}&\cdots&x_{1,n}\\ &&&&x_{2,n}\\ &&&&\vdots\\ &&&&x_{n-1,n}\\ &&&&0\end{pmatrix}\] where $x_{i,j}\in F$. $\mathcal H_n$ is a nilpotent sub-algebra of $N_n$ in which the identity $XYZ=0$ holds for every $X,Y,Z\in\mathcal H_n$. Moreover, it is easy to verify that $Z(\mathcal H_n)=\{ae_{1,n}:a\in F\}$.

In this brief note, we examine the linear commuting maps over $\mathcal H_n$. In particular, we show that such maps can differ quite a bit from the standard form. The Heisenberg algebra, and its associated group, are key structures in physics, particularly quantum mechanics, and have connections to numerous fields of mathematical study. Thus, exploring the commuting maps over $\mathcal H_n$ might provide us with some useful tools that can be applied in these varied settings. 

Our primary result is as follows.

\begin{theorem}\label{result}
Let $f:\mathcal H_n\rightarrow\mathcal H_n$ be a linear map satisfying $[f(X),X]=0$ for all $X\in\mathcal H_n$. Then there exists $A\in\mathcal C_n$, $B,C\in\mathcal P_n$, and additive $\zeta:\mathcal H_n\rightarrow Z(\mathcal H_n)$ such that \[f(X)=\{X,A\}+X^\tau B+CX^\tau+\zeta(X)\] for all $X\in\mathcal H_n$.
\end{theorem}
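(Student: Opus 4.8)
\section*{Proof proposal}

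The plan is to reduce the commuting condition to a small system of linear constraints on the matrix of $f$ and then read the three operators $\{X,A\}$, $X^\tau B$, and $CX^\tau$ directly off those constraints. Because $XYZ=0$ on $\mathcal H_n$, the product of two elements is central, and a direct computation gives, for $X,Y\in\mathcal H_n$, the bracket formula $[X,Y]=\bigl(\sum_{k=2}^{n-1}(x_{1,k}y_{k,n}-y_{1,k}x_{k,n})\bigr)e_{1,n}$. Thus only the \emph{inner} data, the first-row entries $x_{1,k}$ and last-column entries $x_{k,n}$ for $2\le k\le n-1$, enter the bracket; the corner $x_{1,n}$ is central and drops out. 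I would first record this formula, and then write $f$ in coordinates: since $f(X)\in\mathcal H_n$ is determined by its first row $f(X)_{1,j}$ $(2\le j\le n)$ and its last column $f(X)_{i,n}$ $(2\le i\le n-1)$, each of these is a linear functional of the entries of $X$. Organizing the inner part, I set $f(X)_{1,i}=\sum_k\alpha_{ik}x_{1,k}+\sum_k\beta_{ik}x_{k,n}+(\cdots)x_{1,n}$ and $f(X)_{i,n}=\sum_k\rho_{ik}x_{1,k}+\sum_k\sigma_{ik}x_{k,n}+(\cdots)x_{1,n}$ for $(n-2)\times(n-2)$ coefficient matrices $\alpha,\beta,\rho,\sigma$.

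The condition $[f(X),X]=0$ now reads $\sum_{k=2}^{n-1}\bigl(f(X)_{1,k}\,x_{k,n}-x_{1,k}\,f(X)_{k,n}\bigr)=0$ for all $X$. This is a homogeneous quadratic form in the entries of $X$, so I would polarize it (replace $X$ by $X+Y$ and subtract) to obtain an equivalent bilinear identity, then substitute the basis matrices $e_{1,p}$, $e_{q,n}$, $e_{1,n}$ to isolate coefficients. Grouping the resulting equations by monomial type, two first-row entries, two last-column entries, one of each, and a central entry times a non-central entry, yields exactly four families of constraints: the non-central components of $f(X)$ cannot depend on the central entry $x_{1,n}$; the blocks $\beta$ and $\rho$ must each be skew-symmetric; and $\sigma=\alpha^{\mathsf T}$.

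Reconstructing the operators then matches them to these blocks. For a matrix $A$ supported on the inner index set $\{2,\dots,n-1\}$, a short computation shows that $\{X,A\}=XA+AX$ has inner first row $A^{\mathsf T}$ applied to the first-row vector of $X$ and last column $A$ applied to the last-column vector of $X$; taking $A^{\mathsf T}=\alpha$ reproduces precisely the pair $(\alpha,\alpha^{\mathsf T})$, i.e. the two ``diagonal'' blocks with their forced transpose relation. The skew blocks $\beta,\rho$ are the genuinely non-standard part: since $X^\tau$ interchanges the first-row and last-column data up to the inner reversal $k\mapsto n+1-k$ (call its matrix $J$), right multiplication $X^\tau B$ feeds the last-column vector of $X$ into the first row of the output and left multiplication $CX^\tau$ feeds the first-row vector into the last column, contributing $B^{\mathsf T}J$ to $\beta$ and $CJ$ to $\rho$. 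Both conditions ``$B^{\mathsf T}J$ skew'' and ``$CJ$ skew'' are equivalent to $M=-JM^{\mathsf T}J$ (skew about the anti-diagonal), which is the defining condition for $\mathcal P_n$, and as $B,C$ range over $\mathcal P_n$ these run over all skew-symmetric matrices, so $B,C$ can be chosen to realize $\beta,\rho$. Finally $\zeta(X):=f(X)_{1,n}\,e_{1,n}$ absorbs the entire central component, which none of the three preceding operators can produce and which the commuting condition leaves completely free.

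The main obstacle I anticipate is bookkeeping rather than conceptual: cleanly separating the central coordinate from the inner coordinates throughout, and tracking how the anti-transpose permutes the inner indices, so that the skew-symmetry of $\beta,\rho$ translates into the correct membership conditions for $\mathcal P_n$. It is also worth isolating the small-$n$ behaviour: a skew-symmetric $(n-2)\times(n-2)$ block is nonzero only once $n\ge 4$, which is exactly when $f$ can fail to be of the standard form, while for $n\le 3$ the cross terms vanish and the statement degenerates to standard form. Assembling $A$, $B$, $C$, and $\zeta$ from $\alpha$, $\beta$, $\rho$, and the central functional, and checking that their sum returns $f(X)$, completes the argument.
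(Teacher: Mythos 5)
Your proposal is correct and follows essentially the same route as the paper: coordinatize $f$ via the linear functionals giving its first-row and last-column entries, polarize $[f(X),X]=0$ over basis elements to extract the constraints (independence from the central entry $x_{1,n}$, skew-symmetry of the two cross blocks, and the transpose relation between the two diagonal blocks, which are exactly the paper's Lemmas 2.5--2.8), and then reassemble these blocks as $\{X,A\}$, $X^\tau B$, $CX^\tau$, and a central remainder $\zeta$. Your translation of the skew-symmetry of the cross blocks into skew-persymmetry of $B$ and $C$ via the reversal matrix $J$ matches the paper's verification that $B,C\in\mathcal P_n$ (the hollowness condition being automatic in characteristic $0$), so there is nothing substantive to add.
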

\noindent Here $C_n$ denotes the collection of $n\times n$ matrices with entries in $F$ whose outer boundary (first row and column, $n$-th row and column) contain only zero entries and $P_n$ is the subset of $C_n$ consisting of hollow persymmetric matrices over $F$.

We begin with some preliminary facts and results in Section \ref{prelim}, including several examples to illustrate the interesting structure commuting maps over $\mathcal H_n$ possess. We then conclude with a proof of Theorem \ref{result} in Section \ref{main}

%%%%%%%%%%%%%%%%%%%%%%%%%%%%%%%%%%%

\section{Preliminaries}\label{prelim}

For the remainder of this paper we let $n\ge 3$ be an integer and $F$ a field of characteristic 0. We let $e_{i,j}$ denote the $n\times n$ matrix with entry in position $(i,j)$ equal to 1 and all other entries equal to 0. As stated in the previous section, we use $\mathcal H_n$ to denote the Heisenberg algebra with entries in $F$. We will also use $M_n=M_n(F)$ to denote the collection of $n\times n$ matrices with entries in $F$. Due to the structure of $\mathcal H_n$, it is convenient to work with respect to the anti-diagonal in this setting. As such, we will make use of the anti-commutator $\{A,B\}=AB+BA$. We will also utilize the {\em anti-transpose} $A^{\tau}$, which is the transpose of $A $ with respect to the anti-diagonal. That is, $A^\tau$ is the matrix with $(i,j)$ entry equal to $a_{n-j+1,n-i+1}$. An $n\times n$ matrix $A=(a_{i,j})$ is called {\em persymmetric} if it is symmetric with respect to the anti-diagonal. In other words, $a_{i,j}=a_{n-j+1,n-i+1}$, meaning $A^\tau=A$. Further, we say that $A$ is {\em skew-persymmetric} if $A=-A^\tau$. The following two sets will play key roles in establishing Theorem \ref{result}.

\begin{align*}
C_n&=C_n(F)=\left\{A=\begin{pmatrix}0&0&0\\ 0&A'&0\\ 0&0&0\end{pmatrix}: A'\in M_{n-1}\right\}\\
&\\
P_n&=P_n(F)=\{A\in C_n: A=-A^\tau\ \text{and}\ a_{i,n-i+1}=0\ \text{for all}\ 1\le i\le n\}.
\end{align*}
Colloquially, we refer to $P_n$ as the set of hollow, skew-persymmetric matrices where hollow here refers to having all zero entries along the anti-diagonal.

It is a rather simple task to construct commuting maps which are of the standard form in this setting. For example, fix $A\in \mathcal H_n$ and define $f_A:\mathcal H_n\rightarrow\mathcal H_n$ by $f_A(X)=AX$. Then $f_A(\mathcal H_n)\subset Z(\mathcal H_n)$, making $f_A$ a commuting map for every choice of $A$. As the image of $f_A$ is contained in $Z(\mathcal H_n)$, we have that $f_A$ is trivially of the standard form with $\lambda=0$. It is also possible to generate maps which need not be of the standard form. In the next few examples, we provide several maps which illustrate this point.

\begin{example}[Example 2 in \cite{bounds16}]\label{hn1}
Let $a\in F$ and define $r:\mathcal H_n\rightarrow\mathcal H_n$ by \[r(X)=ax_{1,2}e_{1,n-1}+ax_{n-1,n}e_{2,n}.\] The map $r$ is linear and satisfies \[r(X)X-Xr(X)=ax_{1,2}x_{n-1,n}e_{1,n}-x_{1,2}ax_{n-1,n}e_{1,n}=0,\] making it commuting. We note $r$ is only of the standard form if we take $a=0$ and it aligns with the form present in Theorem \ref{koliu} for all values of $a$ with $\lambda=0$.
\end{example}

\begin{example}\label{hn2}
Let $a_1,a_2,\dots,a_{n-1}\in F$. Define $g:\mathcal H_n\rightarrow \mathcal H_n$ by \[g(X)=\begin{pmatrix} 0&a_1x_{1,2}&a_2x_{1,3}&\cdots &a_{n-2}x_{1,n-1}&a_{n-1}x_{1,n}\\ &&&&&a_1x_{2,n}\\ &&&&&a_2x_{3,n}\\&&&&&\vdots\\ &&&&&a_{n-2}x_{n-1,n}\\ &&&&&0
\end{pmatrix}.\] 
Then $g$ is a linear map satisfying \[g(X)X-Xg(X)=\left[\sum\limits_{i=2}^{n-2}a_{i-1}x_{1,i}x_{i,n}-\sum\limits_{i=2}^{n-2}x_{1,i}a_{i-1}x_{i,n}\right]e_{1,n}=0\] for all $X\in \mathcal H_n$, hence $g$ is commuting. Note that the map $g$ only aligns with the standard form, or the form present in Theorem \ref{koliu}, if $a_1=a_2=\cdots=a_{n-1}$. 
\end{example}

\begin{example}\label{new}
We now consider a more complex example. Let \[B=\begin{pmatrix} 0&0&0&\cdots&0&0&0\\ 0&1&1&\cdots&1&0&0\\ 0&1&0&\cdots &0&-1&0\\ \vdots&\vdots&\vdots&\ddots&\vdots&\vdots&\vdots\\ 0&1&0&\cdots &0&-1&0\\ 0&0&-1&\cdots&-1&-1&0\\0&0&0&\cdots&0&0&0\end{pmatrix}\] and define $h:N_n\rightarrow N_n$ by \[h(X)=[X^\tau,B].\] Through direct calculation we see

\begin{align*}
X^\tau B&= \begin{pmatrix} 0& \sum\limits_{i=3}^{n-1}x_{i,n}&x_{n-1,n}-x_{2,n}&\cdots&x_{n-1,n}-x_{2,n}&-\sum\limits_{i=2}^{n-2}x_{i,n}&0\\ &&&&&&0\\ &&&&&&\vdots\\ &&&&&&0\end{pmatrix}\\
&\\
BX^\tau &=\begin{pmatrix} 0&\cdots&0\\&&\sum\limits_{i=3}^{n-1}x_{1,i}\\ &&x_{1,n-1}-x_{1,2}\\ &&\vdots\\&&x_{1,n-1}-x_{1,2}\\&&-\sum\limits_{i=2}^{n-2}x_{1,i}\\ &&0
\end{pmatrix}.
\end{align*}
Note that \begin{align*}
h(X)X&=\left[ x_{2,n}\sum\limits_{i=3}^{n-1}x_{i,n}+(x_{n-1,n}-x_{2,n})\sum\limits_{i=3}^{n-2}x_{i,n}-x_{n-1,n}\sum\limits_{i=2}^{n-2}x_{i,n}\right]e_{1,n}=0, \\
&\\
Xh(X)&=\left[x_{1,2}\sum\limits_{i=3}^{n-1}x_{1,i}+(x_{1,n-1}-x_{1,2})\sum\limits_{i=3}^{n-2}x_{1,i}-x_{1,n-1}\sum\limits_{i=2}^{n-2}x_{1,i}\right]e_{1,n}=0,
\end{align*} hence $[h(X),X]=0$ and $h$ is commuting. Note that $h$ is of the form described in Theorem \ref{result} with $A=0, \zeta(X)\equiv 0$, and $B=-C$.
\end{example}

We provide one more example of a map of the form given in Theorem \ref{result} where the matrices $B,C$ are distinct and $A$ is nonzero.

\begin{example}\label{newer}
Let $A$ be any matrix in $\mathcal C_n$, $B$ the matrix from Example \ref{new}, and  \[
C=	\begin{pmatrix} 0&0&0&\cdots&0&0&0\\ 0&1&1&\cdots&1&0&0\\ 0&1&1&\iddots&0&-1&0\\ \vdots&\vdots&\iddots&\iddots&\iddots&\vdots&\vdots\\ 0&1&0&\iddots&-1&-1&0\\ 0&0&-1&\cdots&-1&-1&0\\0&0&0&\cdots&0&0&0\end{pmatrix}.	\]
Define $f:\mathcal H_n\rightarrow\mathcal H_n$ by $f(X)=\{X,A\}+X^\tau B+CX^\tau$. Recall $XYZ=0$ for all $X,Y,Z\in\mathcal H_n$. Therefore, \[ [\{X,A\},X]=XAX+AX^2-XAX-X^2A=0\] for all $X\in\mathcal H_n$.

As in Example \ref{new} we have \[X^\tau B= \begin{pmatrix} 0& \sum\limits_{i=3}^{n-1}x_{i,n}&x_{n-1,n}-x_{2,n}&\cdots&x_{n-1,n}-x_{2,n}&-\sum\limits_{i=2}^{n-2}x_{i,n}&0\\ &&&&&&0\\ &&&&&&\vdots\\ &&&&&&0\end{pmatrix}.\] Examining $[X^\tau B,X]$ reveals \begin{align*}
[X^\tau B,X]&=X^\tau BX-0\\
&=\left(x_{2,n}\sum\limits_{i=3}^{n-1}x_{i,n}+(x_{n-1,n}-x_{2,n})\sum\limits_{i=3}^{n-2}x_{i,n}-x_{n-1,n}\sum\limits_{i=2}^{n-2}x_{i,n}\right)e_{1,n}\\
&=(x_{2,n}x_{n-1,n}-x_{n-1,n}x_{2,n})e_{1,n}\\
&=0.
\end{align*}

Through direct calculation we also obtain \[CX^\tau =\begin{pmatrix} &0&&\cdots&0\\ 
&&&&\sum\limits_{i=3}^{n-1}x_{1,i}\\
&&&&\sum\limits_{i=4}^{n-1}x_{1,i}-x_{1,2}\\
&&&&\sum\limits_{i=5}^{n-1}x_{1,i}-\sum\limits_{i=2}^{3}x_{1,i}\\
&&&&\vdots\\
&&&&x_{1,n-1}-\sum\limits_{i=2}^{n-3}x_{1,i}\\
&&&&-\sum\limits_{i=2}^{n-2}x_{1,i}\\
&&&&0
\end{pmatrix}.
\] Examining $[CX^\tau,X]$ we see 
\[[CX^\tau,X]=0-XCX^\tau=\left( \sum\limits_{j=2}^{n-2}\sum\limits_{i=j+1}^{n-1}x_{1,j}x_{1,i}-\sum\limits_{j=3}^{n-1}\sum\limits_{i=2}^{j-1}x_{1,j}x_{1,i} \right)e_{1,n}.\] The double sum $\sum\limits_{j=2}^{n-2}\sum\limits_{i=j+1}^{n-1}x_{1,j}x_{1,i}$ consists of all pairs $x_{1,j}x_{1,i}$ with $j<i$ and each pair appears precisely once. Similarly, the double sum $\sum\limits_{j=3}^{n-1}\sum\limits_{i=2}^{j-1}x_{1,j}x_{1,i}$ consists of all pairs $x_{1,j}x_{1,i}$ with $i<j$ and each pair appearing once. Thus, \[[CX^\tau,X]=\left( \sum\limits_{j=2}^{n-2}\sum\limits_{i=j+1}^{n-1}x_{1,j}x_{1,i}-\sum\limits_{j=3}^{n-1}\sum\limits_{i=2}^{j-1}x_{1,j}x_{1,i} \right)e_{1,n}=0.\]

Therefore, \[[f(X),X]=[\{X,A\},X]+[X^\tau B,X]+[CX^\tau,X]=0,\] making $f$ a commuting map on $\mathcal H_n$.
\end{example}

As the statement of Theorem \ref{result} indicates, maps of the form given in Example \ref{newer} will play a key role in describing the general form of commuting maps over $\mathcal H_n$. To prove this theorem, we first establish a few preliminary results. Suppose $f:\mathcal H_n\rightarrow\mathcal H_n$ is a linear, commuting map. Since $f$ is linear, there exist linear maps $f_{1,i},f_{i,n}:\mathcal H_n\rightarrow F$, $2\le i\le n-1$, such that \[f(X)=\begin{pmatrix}0&f_{1,2}(X)&f_{1,3}(X)&\cdots&f_{1,n}(X)\\ &&&&f_{2,n}(X)\\ &&&&\vdots\\ &&&&f_{n-1,n}(X)\\ &&&&0\end{pmatrix}.\] We begin by exploring the structure of the maps $f_{1,i},f_{i,n}$.

\begin{lemma}\label{l0}
 $f_{1,i}(e_{i,n})=f_{i,n}(e_{1,i})=0$ for each $2\le i\le n-1$.
\end{lemma}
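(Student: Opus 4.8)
The plan is to reduce the commuting hypothesis to a single scalar functional identity and then evaluate it on two families of elementary matrices. Since $XYZ=0$ for all $X,Y,Z\in\mathcal H_n$, any product of two elements of $\mathcal H_n$ lands in $Z(\mathcal H_n)=Fe_{1,n}$, and a direct computation of such a product gives $XY=\bigl(\sum_{k=2}^{n-1}x_{1,k}y_{k,n}\bigr)e_{1,n}$. Applying this with $Y=f(X)$ and $Y=X$ in the two orders yields
\begin{equation*}
[f(X),X]=\left(\sum_{k=2}^{n-1}f_{1,k}(X)\,x_{k,n}-\sum_{k=2}^{n-1}x_{1,k}\,f_{k,n}(X)\right)e_{1,n},
\end{equation*}
so the hypothesis $[f(X),X]=0$ is equivalent to the scalar identity
\begin{equation*}
\sum_{k=2}^{n-1}f_{1,k}(X)\,x_{k,n}=\sum_{k=2}^{n-1}x_{1,k}\,f_{k,n}(X)\qquad\text{for all }X\in\mathcal H_n.\tag{$\star$}
\end{equation*}

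With $(\star)$ in hand the lemma follows by two substitutions. Fixing $i$ with $2\le i\le n-1$ and taking $X=e_{i,n}$, every $x_{1,k}$ vanishes while $x_{k,n}=\delta_{k,i}$, so the right-hand side of $(\star)$ is $0$ and the left-hand side collapses to $f_{1,i}(e_{i,n})$; this gives $f_{1,i}(e_{i,n})=0$. Symmetrically, taking $X=e_{1,i}$ makes every $x_{k,n}$ vanish while $x_{1,k}=\delta_{k,i}$, so the left-hand side is $0$ and the right-hand side collapses to $f_{i,n}(e_{1,i})$, giving $f_{i,n}(e_{1,i})=0$.

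There is no substantive obstacle here; the only step requiring care is the bookkeeping in deriving $(\star)$, namely checking that $f(X)X$ and $Xf(X)$ contribute nothing outside the $(1,n)$ entry and correctly pinning down the surviving index range. The endpoints $k=1$ and $k=n$ drop out precisely because the corresponding diagonal entries $x_{1,1}$ and $y_{n,n}$ of a Heisenberg matrix are zero, which is what restricts both sums to $2\le k\le n-1$. Once these ranges are fixed the two evaluations are immediate, and it is worth establishing $(\star)$ cleanly at this stage since it is the relation that the subsequent lemmas will repeatedly exploit.
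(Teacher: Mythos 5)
Your proof is correct and follows essentially the same route as the paper: both reduce to evaluating the commuting identity on $X=e_{1,i}$ and $X=e_{i,n}$, where the products collapse to a single entry in position $(1,n)$. The only difference is organizational --- you first record the general scalar identity $(\star)$ and then specialize, while the paper specializes directly --- and your index bookkeeping for $(\star)$ is accurate.
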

\begin{proof}
As $f$ is commuting we have \begin{align*}0&=f(e_{1,i})e_{1,i}-e_{1,i}f(e_{1,i})\\
&=0-f_{i,n}(e_{1,i}),
\end{align*}
and \begin{align*}0&=f(e_{i,n})e_{i,n}-e_{i,n}f(e_{i,n})\\
&=f_{1,i}(e_{i,n})-0.
\end{align*}
\end{proof}

\begin{lemma}\label{l1}
For each $i,j$ we have \begin{align*} f_{j,n}(e_{1,i})&=-f_{i,n}(e_{1,j}),\\ f_{1,j}(e_{1,i})&=f_{i,n}(e_{j,n}),\\ f_{1,j}(e_{i,n})&=-f_{1,i}(e_{j,n}). \end{align*}
\end{lemma}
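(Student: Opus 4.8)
The starting point is the defining property of $\mathcal H_n$, namely that any product of three of its elements is zero. A direct consequence is that for $X,Y\in\mathcal H_n$ the product $XY$ collapses to a single central term: writing $X=(x_{i,j})$, $Y=(y_{i,j})$ one finds $XY=\bigl(\sum_{i=2}^{n-1}x_{1,i}y_{i,n}\bigr)e_{1,n}$. Applying this formula to both $f(X)X$ and $Xf(X)$ (legitimate since $f(X)\in\mathcal H_n$), the commuting hypothesis $[f(X),X]=0$ becomes the single scalar identity
\begin{equation*}
\sum_{i=2}^{n-1}\bigl(f_{1,i}(X)\,x_{i,n}-x_{1,i}\,f_{i,n}(X)\bigr)=0\qquad\text{for all }X\in\mathcal H_n.\tag{$\ast$}
\end{equation*}
This identity encodes everything we need, and the plan is to substitute well-chosen matrices $X$ into $(\ast)$ and read off the three relations, using Lemma \ref{l0} to discard the ``diagonal'' values that appear. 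Throughout I keep the indices in the range $2\le i,j\le n-1$ so that all of the $f_{1,i},f_{i,n}$ are defined and Lemma \ref{l0} applies.

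For the first relation I would set $X=e_{1,i}+e_{1,j}$. Then every entry $x_{k,n}$ of the last column vanishes, so the first sum in $(\ast)$ drops out and only $-\bigl(f_{i,n}(X)+f_{j,n}(X)\bigr)=0$ survives. Expanding by linearity and using $f_{i,n}(e_{1,i})=f_{j,n}(e_{1,j})=0$ from Lemma \ref{l0} leaves $f_{i,n}(e_{1,j})+f_{j,n}(e_{1,i})=0$, which is the claim. The third relation is dual: substituting $X=e_{i,n}+e_{j,n}$ kills the top row, so only the first sum in $(\ast)$ remains, giving $f_{1,i}(X)+f_{1,j}(X)=0$; the same cancellation of diagonal values via Lemma \ref{l0} yields $f_{1,i}(e_{j,n})+f_{1,j}(e_{i,n})=0$.

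The middle relation mixes the two columns, so for it I would take $X=e_{1,i}+e_{j,n}$. Now each sum in $(\ast)$ contributes exactly one surviving term: the first gives $f_{1,j}(X)$ (from $x_{j,n}=1$) and the second gives $-f_{i,n}(X)$ (from $x_{1,i}=1$). Expanding both by linearity and cancelling $f_{1,j}(e_{j,n})=0$ and $f_{i,n}(e_{1,i})=0$ by Lemma \ref{l0} produces $f_{1,j}(e_{1,i})-f_{i,n}(e_{j,n})=0$, exactly as required.

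I do not anticipate a genuine obstacle: once $(\ast)$ is in hand the argument is purely bookkeeping, each relation isolated by a two-term substitution that annihilates one of the two sums. The only step that warrants care is the derivation of $(\ast)$ itself, which relies on the triple-product identity together with the fact that $f$ maps $\mathcal H_n$ into $\mathcal H_n$; everything downstream is then forced, and the characteristic-zero hypothesis is not even needed for this lemma.
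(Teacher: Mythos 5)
Your proposal is correct and takes essentially the same route as the paper: the same three substitutions $e_{1,i}+e_{1,j}$, $e_{1,i}+e_{j,n}$, and $e_{i,n}+e_{j,n}$ into the linearized commuting identity, read off term by term. The only cosmetic difference is that you first record the scalar form $(\ast)$ of $[f(X),X]=0$ and cancel the diagonal values via Lemma~\ref{l0}, whereas the paper discards those terms directly because each is itself a commutator $[f(e),e]=0$; both are valid.
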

\begin{proof}
Replacing $X$ with $e_{1,i}+e_{1,j}$ in $[f(X),X]=0$ yields \begin{align*}0&=f(e_{1,i})e_{1,j}+f(e_{1,j})e_{1,i}-e_{1,j}f(e_{1,i})-e_{1,i}f(e_{1,j})\\
&=0+0-f_{j,n}(e_{1,i})-f_{i,n}(e_{1,j}).
\end{align*}
Thus, \begin{equation}\label{e4}f_{j,n}(e_{1,i})=-f_{i,n}(e_{1,j}).\end{equation}

Next, replacing $X$ with $e_{1,i}+e_{j,n}$ in $[f(X),X]=0$ yields \begin{align*}0&=f(e_{1,i})e_{j,n}+f(e_{j,n})e_{1,i}-e_{j,n}f(e_{1,i})-e_{1,i}f(e_{j,n})\\
&=f_{1,j}(e_{1,i})+0-0-f_{i,n}(e_{j,n}).
\end{align*}
Therefore, \begin{equation}\label{e5}f_{1,j}(e_{1,i})=f_{i,n}(e_{j,n}).\end{equation}

Finally, replacing $X$ with $e_{i,n}+e_{j,n}$ yields \begin{align*}0&=f(e_{i,n})e_{j,n}+f(e_{j,n})e_{i,n}-e_{j,n}f(e_{i,n})-e_{i,n}f(e_{j,n})\\
&=f_{1,j}(e_{i,n})+f_{1,i}(e_{j,n})-0-0.
\end{align*}
Hence, \begin{equation}\label{e6}f_{1,j}(e_{i,n})=-f_{1,i}(e_{j,n}).\end{equation}

\end{proof}

\begin{lemma}\label{l2}
For each $i\in{2,\dots,n-1}$ we have $f_{i,n}(X)=\sum\limits_{j=2}^{n-1}[f_{1,j}(e_{1,i})x_{j,n}-x_{1,j}f_{j,n}(e_{1,i})]$ for all $X\in \mathcal H_n$.
\end{lemma}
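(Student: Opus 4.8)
The plan is to expand $f_{i,n}(X)$ using linearity over a basis of $\mathcal H_n$ and then rewrite the resulting coefficients via the identities already established in Lemma \ref{l1}. A general element of $\mathcal H_n$ decomposes as $X=\sum_{j=2}^{n}x_{1,j}e_{1,j}+\sum_{j=2}^{n-1}x_{j,n}e_{j,n}$, so linearity of the functional $f_{i,n}$ immediately gives
\[
f_{i,n}(X)=\sum_{j=2}^{n}x_{1,j}f_{i,n}(e_{1,j})+\sum_{j=2}^{n-1}x_{j,n}f_{i,n}(e_{j,n}).
\]
The task then reduces to matching this expression against the claimed formula after substituting $f_{i,n}(e_{1,j})=-f_{j,n}(e_{1,i})$ from \eqref{e4} and $f_{i,n}(e_{j,n})=f_{1,j}(e_{1,i})$ from \eqref{e5}.

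First I would dispose of the single stray term in the expansion above, namely the $j=n$ summand $x_{1,n}f_{i,n}(e_{1,n})$, which is covered by neither substitution since both sums run only to $n-1$. The main obstacle is therefore to show $f_{i,n}(e_{1,n})=0$, i.e.\ that the value of $f$ at the central generator $e_{1,n}$ has vanishing $(i,n)$ entry for $2\le i\le n-1$. To see this I would write an arbitrary $X$ as $Y+c\,e_{1,n}$ with $Y$ free of the $e_{1,n}$ component; since $e_{1,n}\in Z(\mathcal H_n)$ we have $[f(Y),e_{1,n}]=[f(e_{1,n}),e_{1,n}]=0$, so expanding $[f(X),X]=0$ and using $[f(Y),Y]=0$ collapses the identity to $c\,[f(e_{1,n}),Y]=0$ for all $c\in F$ and all $Y\in\mathcal H_n$. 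Taking $c=1$ shows that $f(e_{1,n})$ centralizes $\mathcal H_n$, whence $f(e_{1,n})\in Z(\mathcal H_n)=\{a\,e_{1,n}:a\in F\}$, and in particular $f_{i,n}(e_{1,n})=0$ for every $2\le i\le n-1$.

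With the stray term removed, the first sum runs over $2\le j\le n-1$, and substituting \eqref{e4} and \eqref{e5} turns
\[
f_{i,n}(X)=\sum_{j=2}^{n-1}x_{1,j}f_{i,n}(e_{1,j})+\sum_{j=2}^{n-1}x_{j,n}f_{i,n}(e_{j,n})
\]
directly into $\sum_{j=2}^{n-1}\bigl[f_{1,j}(e_{1,i})\,x_{j,n}-x_{1,j}\,f_{j,n}(e_{1,i})\bigr]$, which is the desired identity. Apart from establishing the vanishing of the central term, every step is routine bookkeeping: once $f(e_{1,n})$ is known to be central, the formula is nothing more than the basis expansion of a linear functional rewritten through the relations of Lemma \ref{l1}.
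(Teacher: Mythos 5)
Your proof is correct, but it follows a genuinely different route from the paper's. The paper proves Lemma \ref{l2} in one stroke: it substitutes $X+e_{1,i}$ into $[f(X),X]=0$, expands by linearity to get $f(X)e_{1,i}+f(e_{1,i})X-e_{1,i}f(X)-Xf(e_{1,i})=0$, and reads off the $(1,n)$ entry, which directly produces $f_{i,n}(X)=\sum_{j=2}^{n-1}[f_{1,j}(e_{1,i})x_{j,n}-x_{1,j}f_{j,n}(e_{1,i})]$ without ever invoking Lemma \ref{l1}. You instead expand $f_{i,n}$ over the basis $\{e_{1,j}\}\cup\{e_{j,n}\}$ and convert the coefficients using \eqref{e4} and \eqref{e5}; this forces you to deal separately with the coefficient $f_{i,n}(e_{1,n})$, which you correctly kill by polarizing against $e_{1,n}$ to show $f(e_{1,n})$ centralizes $\mathcal H_n$ and hence lies in $Z(\mathcal H_n)=\{ae_{1,n}\}$. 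The trade-off: the paper's argument is shorter and self-contained, while yours is a little longer but isolates the structural fact that $f$ maps the center into the center --- a true and potentially useful observation that the paper never records explicitly --- and makes transparent exactly how the formula of Lemma \ref{l2} is assembled from the pairwise relations of Lemma \ref{l1}. Both arguments are, at bottom, the same polarization of the quadratic identity $[f(X),X]=0$.
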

\begin{proof}
Replacing $X$ with $X+e_{1,i}$ in $[f(X),X]=0$ and applying the linearity of $f$ yields 
\begin{align*} 0&=f(X)e_{1,i}+f(e_{1,i})X-e_{1,i}f(X)-Xf(e_{1,i})\\
&=0+\sum\limits_{j=2}^{n-1}f_{1,j}(e_{1,i})x_{j,n}-f_{i,n}(X)-\sum\limits_{j=2}^{n-1}x_{1,j}f_{j,n}(e_{1,i}).
\end{align*}
Therefore, \begin{equation}\label{e1}f_{i,n}(X)=\sum\limits_{j=2}^{n-1}[f_{1,j}(e_{1,i})x_{j,n}-x_{1,j}f_{j,n}(e_{1,i})].\end{equation}
\end{proof}

\begin{lemma}\label{l3}
For each $i\in{2,\dots,n-1}$ we have $f_{1,i}(X)=\sum\limits_{j=2}^{n-1}[x_{1,j}f_{j,n}(e_{i,n})-f_{1,j}(e_{i,n})x_{j,n}]$ for all $X\in \mathcal H_n$.
\end{lemma}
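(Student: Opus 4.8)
The plan is to mirror the proof of Lemma \ref{l2}, replacing the role of $e_{1,i}$ with $e_{i,n}$; the identity we seek is precisely the ``column-side'' analogue of the ``row-side'' identity established there. First I would substitute $X+e_{i,n}$ for $X$ in $[f(X),X]=0$ and expand by linearity of $f$. The self-commutators $[f(X),X]$ and $[f(e_{i,n}),e_{i,n}]$ both vanish (the former by hypothesis, the latter by the commuting property applied to $e_{i,n}$), leaving the identity $f(X)e_{i,n}+f(e_{i,n})X-e_{i,n}f(X)-Xf(e_{i,n})=0$.

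Next I would evaluate the four products using the Heisenberg structure, recalling that the product of any two elements of $\mathcal H_n$ lies in $Z(\mathcal H_n)=Fe_{1,n}$. Since $f(X)$ has zero last row, the term $e_{i,n}f(X)$ vanishes outright. Post-multiplying $f(X)$ by $e_{i,n}$ shifts its $i$-th column into the $n$-th column, and because $f(X)$ is supported on its first row and last column, only the $(1,i)$ entry survives, giving $f(X)e_{i,n}=f_{1,i}(X)e_{1,n}$. The remaining two products land in the $(1,n)$ corner: pairing the first row of $f(e_{i,n})$ against the last column of $X$ yields $f(e_{i,n})X=\bigl(\sum_{j=2}^{n-1}f_{1,j}(e_{i,n})x_{j,n}\bigr)e_{1,n}$, and symmetrically $Xf(e_{i,n})=\bigl(\sum_{j=2}^{n-1}x_{1,j}f_{j,n}(e_{i,n})\bigr)e_{1,n}$.

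Substituting these back and reading off the coefficient of $e_{1,n}$ gives $f_{1,i}(X)+\sum_{j=2}^{n-1}f_{1,j}(e_{i,n})x_{j,n}-\sum_{j=2}^{n-1}x_{1,j}f_{j,n}(e_{i,n})=0$, which rearranges to the claimed formula. I do not anticipate a genuine obstacle here: the argument is a direct dual of Lemma \ref{l2}, and the only point requiring care is bookkeeping the surviving matrix entries---in particular confirming that $e_{i,n}f(X)=0$ and that each nonzero product collapses to the anti-diagonal corner $e_{1,n}$. A useful sanity check on the \emph{form} of the answer is the structural parallelism with Lemma \ref{l2} under the anti-transpose, which interchanges $e_{1,i}$ and $e_{i,n}$ (up to relabeling $i\mapsto n-i+1$) and carries the row-side formula onto the present column-side one with the expected swap of $f_{i,n}\leftrightarrow f_{1,i}$ and an accompanying sign flip.
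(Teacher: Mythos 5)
Your proposal is correct and follows the paper's proof essentially verbatim: substitute $X+e_{i,n}$ into $[f(X),X]=0$, note $e_{i,n}f(X)=0$ and $f(X)e_{i,n}=f_{1,i}(X)e_{1,n}$, expand the two remaining products into the $(1,n)$ corner, and solve for $f_{1,i}(X)$. The entry-by-entry bookkeeping you describe matches the paper's computation exactly.
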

\begin{proof}
Replacing $X$ with $X+e_{i,n}$ in $[f(X),X]=0$ and applying the linearity of $f$ yields 
\begin{align*} 0&=f(X)e_{i,n}+f(e_{i,n})X-e_{i,n}f(X)-Xf(e_{i,n})\\
&=f_{1,i}(X)+\sum\limits_{j=2}^{n-1}f_{1,j}(e_{i,n})x_{j,n}-0-\sum\limits_{j=2}^{n-1}x_{1,j}f_{j,n}(e_{i,n}).
\end{align*}
Therefore, \begin{equation}\label{e2}f_{1,i}(X)=\sum\limits_{j=2}^{n-1}[x_{1,j}f_{j,n}(e_{i,n})-f_{1,j}(e_{i,n})x_{j,n}].\end{equation}
\end{proof}

%%%%%%%%%%%%%%%%%%%%%%%%%%%%%%%%%%%%%%%%%%

\section{Main Results}\label{main}

We now prove Theorem \ref{result}.

\begin{proof}[Proof of Theorem \ref{result}]

Using Equations \eqref{e1} and \eqref{e2}, we can decompose $f$ as \[f(X)=\alpha(X)-\beta(X)-\gamma(X)+\zeta(X)\] where \begin{align*}
\alpha(X)&=\begin{pmatrix}0&\sum\limits_{j=2}^{n-1}x_{1,j}f_{j,n}(e_{2,n})&\sum\limits_{j=2}^{n-1}x_{1,j}f_{j,n}(e_{3,n})&\cdots&\sum\limits_{j=2}^{n-1}x_{1,j}f_{j,n}(e_{n-1,n})&0 \\
&&&&&\sum\limits_{j=2}^{n-1}f_{1,j}(e_{1,2})x_{j,n}\\
&&&&&\sum\limits_{j=2}^{n-1}f_{1,j}(e_{1,3})x_{j,n}\\
&&&&&\vdots\\
&&&&&\sum\limits_{j=2}^{n-1}f_{1,j}(e_{1,n-1})x_{j,n}\\
&&&&&0\end{pmatrix},\\
\beta(X)&=\begin{pmatrix}0&\sum\limits_{j=2}^{n-1}f_{1,j}(e_{2,n})x_{j,n}&\sum\limits_{j=2}^{n-1}f_{1,j}(e_{3,n})x_{j,n}&\cdots&\sum\limits_{j=2}^{n-1}f_{1,j}(e_{n-1,n})x_{j,n}&0 \\
&&&&&0\\
&&&&&\vdots\\
&&&&&0\end{pmatrix},\\
\gamma(X)&=\begin{pmatrix}0&0&\cdots&0 \\
&&&\sum\limits_{j=2}^{n-1}x_{1,j}f_{j,n}(e_{1,2})\\
&&&\sum\limits_{j=2}^{n-1}x_{1,j}f_{j,n}(e_{1,3})\\
&&&\vdots\\
&&&\sum\limits_{j=2}^{n-1}x_{1,j}f_{j,n}(e_{1,n-1})\\
&&&0\end{pmatrix},
\end{align*}
and $\zeta(X)=f_{1,n}(X)e_{1,n}$. We proceed by examining the structure of these maps.

Let \[A=\begin{pmatrix} 0&0&0&\cdots&0&0\\
0&f_{2,n}(e_{2,n})&f_{2,n}(e_{3,n})&\cdots&f_{2,n}(e_{n-1,n})&0\\
0&f_{3,n}(e_{2,n})&f_{3,n}(e_{3,n})&\cdots&f_{3,n}(e_{n-1,n})&0\\
\vdots&\vdots&\vdots&&\vdots&\vdots\\
0&f_{n-1,n}(e_{2,n})&f_{n-1,n}(e_{3,n})&\cdots &f_{n-1,n}(e_{n-1,n})&0\\
0&0&0&\cdots&0&0 \end{pmatrix}.\] Clearly $A\in\mathcal C_n$. Consider the matrices AX and XA. For $1<i<n$ the $(1,i)$ entry of the matrix $XA$ is given by $\sum\limits_{j=2}^{n-2}x_{1,j}f_{j,n}(e_{i,n})$ and all other entries will be equal to zero. For the matrix $AX$, we have the $(i,n)$ entry, $1<i<n$, is given by $\sum\limits_{j=2}^{n-2}f_{i,n}(e_{j,n})x_{j,n}$ with all other entries equal to zero. Applying Equation \eqref{e5} we get $\sum\limits_{j=2}^{n-2}f_{i,n}(e_{j,n})x_{j,n}=\sum\limits_{j=2}^{n-2}f_{1,j}(e_{1,i})x_{j,n}$. Therefore, 

 \[\{X,A\}=XA+AX=\alpha(X).\]

Next, let \[B=\begin{pmatrix} 0&0&0&\cdots&0&0\\
0&f_{1,n-1}(e_{2,n})&f_{1,n-1}(e_{3,n})&\cdots&f_{1,n-1}(e_{n-1,n})&0\\
0&f_{1,n-2}(e_{2,n})&f_{1,n-2}(e_{3,n})&\cdots&f_{1,n-2}(e_{n-1,n})&0\\
\vdots&\vdots&\vdots&&\vdots&\vdots\\
0&f_{1,2}(e_{2,n})&f_{1,2}(e_{3,n})&\cdots &f_{1,2}(e_{n-1,n})&0\\
0&0&0&\cdots&0&0 \end{pmatrix}.\] Then the matrix $X^\tau B$ has $(1,i)$ entry for $1<i<n$ equal to $\sum\limits_{j=2}^{n-1}x_{j,n}f_{1,j}(e_{i,n})$ and all other entries are equal to zero, thus \[X^\tau B=\beta(X).\]

Clearly $B\in C_n$. By Equation \eqref{e6} we have \[b_{i,j}=f_{1,n-i+1}(e_{j,n})=-f_{1,j}(e_{n-i+1,n})=-b_{n-j+1,n-i+1}\] for all $(i,j)$. As $f_{1,i}(e_{i,n})=0$ for all $i$ by Lemma \ref{l0} and $b_{i,n-i+1}=f_{1,n-i+1}(e_{n-i+1,n})$ for $1<i<n$, it follows that $B\in\mathcal P_n$.

Finally, take \[C=\begin{pmatrix} 0&0&0&\cdots&0&0\\
0&f_{n-1,n}(e_{1,2})&f_{n-2,n}(e_{1,2})&\cdots&f_{2,n}(e_{1,2})&0\\
0&f_{n-1,n}(e_{1,3})&f_{n-2,n}(e_{1,3})&\cdots&f_{2,n}(e_{1,3})&0\\
\vdots&\vdots&\vdots&&\vdots&\vdots\\
0&f_{n-1,n}(e_{1,n-1})&f_{n-2,n}(e_{1,n-1})&\cdots &f_{2,n}(e_{1,n-1})&0\\
0&0&0&\cdots&0&0 \end{pmatrix}.\] The matrix $CX^\tau$ has $(i,n)$ entry for $1<i<n$ equal to $\sum\limits_{j=2}^{n-2}f_{j,n}(e_{1,i})x_{1,j}$ and all other entries are equal to zero. Therefore \[CX^\tau=\gamma(X).\] By Equation \eqref{e4} we have 
\[c_{i,j}=f_{n-j+1,n}(e_{1,i})=-f_{i,n}(e_{1,n-j+1})=-c_{n-j+1,n-i+1}\] for all $(i,j)$. Again applying Lemma \ref{l0} we get $c_{i,n-i+1}=f_{i,n}(e_{1,i})=0$ for all $1<i<n$. Therefore, $C\in\mathcal P_n$.

Thus, we have \[f(X)=\alpha(X)-\beta(X)-\gamma(X)+\zeta(X)=\{X,A\}-X^\tau B+CX^\tau+\zeta(X)\] where $A\in\mathcal C_n$, $B,C\in\mathcal P_n$ and $\zeta:\mathcal H_n\rightarrow Z(\mathcal H_n)$. To see that $\zeta$ is additive, note \begin{align*}\zeta(X+Y)&=f(X+Y)-\{X+Y,A\}+(X+Y)^\tau B+C(X+Y)^\tau\\
&=f(X)+f(Y)-\{X,A\}-\{Y,A\}+X^\tau B+Y^\tau B +CX^\tau+CY^\tau\\
&=\zeta(X)+\zeta(Y).
\end{align*}

\end{proof}

%%%%%%%%%%%%%%%%%%%%%%%%%%%%%%%%

\section*{Acknowledgement}
This material is based upon work supported by the National Science Foundation under Award No. 2316995.

\bibliographystyle{amsplain}

\end{document}